\newtheorem{theorem}{Theorem}[section]
\newtheorem{corollary}{Corollary}[theorem]
\newtheorem{lemma}[theorem]{Lemma}
\newtheorem{proposition}[theorem]{Proposition}
\newtheorem{definition}[theorem]{Definition}
\newcommand{\gon}{\mathrm{gon}}
\newcommand{\val}{\mathrm{val}}
\newcommand{\tw}{\mathrm{tw}}
\newcommand{\Cliff}{\mathrm{Cliff}}
\newcommand{\Jac}{\mathrm{Jac}}
\title{Gonality of Random Graphs}
\author{Andrew Deveau}
\author{David Jensen}
\author{Jenna Kainic}
\author{Dan Mitropolsky}
\date{}
\begin{document}
\maketitle
\begin{abstract}
We show that the expected gonality of a random graph is asymptotic to the number of vertices.
\end{abstract}

\section{Introduction}

In the moduli space of curves, the locus of Brill-Noether general curves is a dense open subset \cite{GriffithsHarris80}.  In the moduli space of tropical curves, the Brill-Noether general locus is open \cite{LPP12,Len12} and non-empty \cite{tropicalBN}, but it is not dense \cite{Heawood}.  A natural question, therefore, is \emph{how likely is it that a graph is Brill-Noether general?}

In this paper, we approach this question by studying the gonality of Erd\"{o}s-R\'{e}nyi random graphs.  This is a natural follow-up to other recent work on the divisor theory of random graphs.  Most notably, in \cite{Lor08}, Lorenzini asks about the distribution of divisor class groups of random graphs, and in \cite{CLP13} it is conjectured that they are distributed according to a variation of the Cohen-Lenstra heuristics.  This conjecture is proved in \cite{Wood14}, expanding on the preliminary work of \cite{CKLPW14}.

Recall that a divisor $D$ on a graph $G$ has positive rank if $D-v$ is equivalent to an effective divisor for all vertices $v$ in $G$.  The gonality of a graph is the smallest degree of a divisor with positive rank.  For a more detailed account of divisor theory and gonality of graphs, see \cite{BakerNorine07} and \cite{Baker08}.  Our main result is the following.

\begin{theorem}
\label{thm:mainthm}
Let $p(n) = \frac{c(n)}{n}$, and suppose that $c(n) \ll n$ is unbounded.  Then
$$ \mathbb{E} ( \gon (G(n,p))) \sim n . $$
\end{theorem}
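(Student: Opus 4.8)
The plan is to prove that for every fixed $\epsilon>0$ one has $(1-\epsilon)n\le\gon(G(n,p))\le n$ with probability tending to $1$, and then pass to expectations. The upper bound needs no hypothesis on $p$. Gonality is additive over connected components, because a divisor has positive rank if and only if its restriction to each component does, so $\gon(G)=\sum_i\gon(G_i)$; and if $H$ is a connected simple graph on $m\ge2$ vertices, the divisor with one chip on every vertex except a fixed $v_0$ has positive rank, since firing $V(H)\setminus\{v_0\}$ once removes at most one chip from each other vertex ($H$ being simple) while depositing $\deg(v_0)\ge1$ chips on $v_0$. Hence $\gon(H)\le m-1$, and in all cases $\gon(G)\le|V(G)|$, so $\mathbb E(\gon(G(n,p)))\le n$.

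For the lower bound I would invoke the inequality $\gon(G)\ge\tw(G)$ of van Dobben de Bruyn and Gijswijt and show that $\tw(G(n,p))\ge(1-3\epsilon)n-1$ asymptotically almost surely, by exhibiting a large bramble. Let $\mathcal B$ be the family of all $S\subseteq V$ with $|S|=\lceil\epsilon n\rceil$ and $G[S]$ connected; since $c(n)\to\infty$, the graph $G(n,p)$ has a component on at least $\epsilon n$ vertices a.a.s.\ , so $\mathcal B\ne\emptyset$. For two disjoint sets of size $\lceil\epsilon n\rceil$ the probability of no edge between them is at most $(1-p)^{\epsilon^2 n^2}\le\exp(-\epsilon^2 c(n) n)$; since there are at most $3^n$ such pairs, the union bound $\exp(n(\ln 3-\epsilon^2 c(n)))\to0$ shows that a.a.s.\ any two disjoint members of $\mathcal B$ are adjacent, so $\mathcal B$ is a bramble. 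A set $H$ meets every member of $\mathcal B$ precisely when every component of $G-H$ has fewer than $\lceil\epsilon n\rceil$ vertices; so if $|H|<(1-3\epsilon)n$ and $H$ were such a hitting set, then since $|V\setminus H|>3\epsilon n$ we could bundle the components of $G[V\setminus H]$ (each of size $<\epsilon n$) into two sets of size at least $\epsilon n$ with no edge between them, which the same union bound excludes. Therefore a.a.s.\ the bramble has order at least $(1-3\epsilon)n$, whence $\gon(G(n,p))\ge\tw(G(n,p))\ge(1-3\epsilon)n-1$.

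Combining, a.a.s.\ $(1-3\epsilon)n-1\le\gon(G(n,p))\le n$, so $\mathbb E(\gon(G(n,p)))\ge(1-o(1))((1-3\epsilon)n-1)$; letting $\epsilon\to0$ gives $\liminf_n\mathbb E(\gon(G(n,p)))/n\ge1$, and with $\mathbb E(\gon(G(n,p)))\le n$ this yields $\mathbb E(\gon(G(n,p)))\sim n$. The one genuinely delicate point is the treewidth estimate. Because $H$ may be chosen after $G$ is revealed, $G-H$ is not a fresh Erd\"{o}s-R\'{e}nyi graph, so one must union-bound over all $H$; this is affordable exactly because $c(n)\to\infty$, which makes $\exp(-\Theta(c(n)n))$ dominate $3^n$, and it is here that the hypothesis on $c(n)$ is used. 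It also remains to dispatch the routine matters: that gonality is well defined and additive for a possibly disconnected $G(n,p)$, that $\gon\ge\tw$ therefore persists for disconnected graphs (via additivity of gonality and $\tw$ being the maximum over components), and that the exceptional events of probability $o(1)$ contribute only $o(n)$ to the expectation since $\gon\le n$ deterministically.
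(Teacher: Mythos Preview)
Your argument is correct and follows the same overall architecture as the paper: bound $\gon$ below via the treewidth inequality $\gon\ge\tw$, bound it above, and pass to expectations using the deterministic bound $\gon\le n$ to control the bad event. The differences are in the details on both sides. For the upper bound the paper proves $\gon(G)\le n-\alpha(G)$ and then invokes Frieze's concentration result for $\alpha(G(n,p))$, which yields the sharper statement that $n-\gon(G(n,p))$ is at least of order $\frac{n\log c(n)}{c(n)}$ with high probability; your trivial bound $\gon\le n$ is of course enough for $\mathbb{E}(\gon)\sim n$, but loses this quantitative information. For the lower bound the paper simply cites the treewidth estimate $\tw(G(n,p))\ge n-o(n)$ from \cite{WLCX11}, whereas you supply a self-contained bramble argument: take all connected $\lceil\epsilon n\rceil$-sets, use a union bound over the at most $3^n$ disjoint pairs to force an edge between any two (this is exactly where $c(n)\to\infty$ is spent), and then show any small hitting set would produce two large edge-disjoint unions of components, a contradiction. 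That argument is sound up to the routine $O(1)$ bookkeeping in the greedy bundling step, and it has the virtue of making the role of the hypothesis on $c(n)$ completely transparent. Your remarks on additivity of gonality and the extension of $\gon\ge\tw$ to disconnected graphs are correct and worth keeping, since $G(n,p)$ need not be connected when $c(n)=o(\log n)$.
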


Theorem \ref{thm:mainthm} essentially says that the expected gonality of a random graph is as high as possible.  We note, however, that random graphs are not Brill-Noether general, as the genus of a random graph is asymptotically $\frac{c(n)n}{2}$, and if $c(n)$ is unbounded, this grows faster than $n$.  From this perspective, it may be more natural to study the gonality of random \emph{regular} graphs, as the genus of such graphs grows in proportion to the number of vertices.  The case of 3-regular graphs would be particularly interesting, as such graphs correspond to top-dimensional strata of the moduli space of tropical curves.

At the time of writing, we became aware of simultaneous work by Amini and Kool, in which they use an improvement on the spectral methods of \cite{CFK13} to show that the gonality of a random graph is bounded above and below by constant multiples of $n$ \cite{AK14}.  Our results are essentially a tightening of these bounds, so that both upper and lower bounds are asymptotic to $n$, which indeed is conjectured in \cite[Section 5.2]{AK14}.  The techniques of \cite{AK14} apply additionally to metric graphs, which we do not discuss here, and to the case of random regular graphs, which they show to have gonality bounded above and below by constant multiples of $n$ as well.

Also of note is the bound that we provide on the error term $n - \mathbb{E} ( \gon (G(n,p)))$ (see Theorem \ref{thm:UpperBound}).  In the future, it would be interesting to explore with what precision we can bound this term.

A more complete study of the Brill-Noether theory of random graphs would involve divisors of rank greater than one.  A natural generalization of the current line of inquiry would be to study the Clifford index of random graphs, defined as
$$ \Cliff (G) := \min_{D \in \Jac (G)} \{ \deg (D) - 2r(D) \vert r(D) > 0 \text{ and } r(K_G - D) > 0 \} . $$
Note that, if the minimum in this expression is obtained by a divisor of rank one, then $\Cliff (G) = \gon (G) -2$.  The Clifford index of an algebraic curve $C$ is known to always be either $\gon (C) - 2$ or $\gon (C) - 3$ \cite{CM91}.  The corresponding statement remains open for graphs, but if true, it would imply that the Clifford index of a random graph is asymptotic to the number of vertices as well.

\subsection*{Acknowledgements}

This paper was written as part of the 2014 Summr Undergraduate Math Research at Yale (SUMRY) program.  We would like to extend our thanks to everyone involved in the program, and in particular to Sam Payne, who suggested this project.  We also thank Matt Kahle for a particularly fruitful discussion.

\section{A Lower Bound}

In this section, we obtain a lower bound on the expected gonality of a random graph.  The first step is to identify a lower bound for the gonality of an aribitrary graph.  This is done in \cite{dBG}, where it is shown that the \emph{treewidth} of a graph is a lower bound for the gonality.

\begin{definition}
A \emph{tree decomposition} of a graph $G$ is a tree $T$ whose nodes are subsets of the vertices of $G$, satisfying the following properties:
\begin{enumerate}
\item  Each vertex of $G$ is contained in at least one node of $T$.
\item  If two nodes of $T$ both contain a given vertex $v$, then all nodes of the tree in the unique path between these two nodes must contain $v$ as well.
\item  If two vertices $v$ and $w$ are adjacent in $G$, then there is a node of $T$ that contains both $v$ and $w$.
\end{enumerate}
The \emph{width} of a tree decomposition is one less than the number of vertices in its largest node.  The \emph{treewidth} $\tw (G)$ of a graph $G$ is the minimum width among all possible tree decompositions of G.
\end{definition}

\begin{proposition} \cite{dBG}
\label{prop:treewidth}
Let $G$ be a simple graph.  Then
$$ \gon(G) \geq \tw(G). $$
\end{proposition}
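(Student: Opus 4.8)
The plan is to exploit a combinatorial characterization of treewidth rather than work with tree decompositions directly. By the min--max theorem of Seymour and Thomas, $\tw(G)+1$ equals both the maximum order of a bramble in $G$ and the least number of cops that can capture an invisible, arbitrarily fast robber in the helicopter cops-and-robber game on $G$. In either form the proposition reduces to a statement about divisors: it is enough to show that an effective divisor $D$ of degree $g := \gon(G)$ with positive rank yields a winning strategy for $g+1$ cops (equivalently, lets us build a hitting set of size at most $g+1$ for any prescribed bramble), since this gives $\tw(G)+1 \le g+1$. I will pursue the cops-and-robber version.

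I would design the strategy so that, at all times, the cops occupy the support of some effective divisor $E \sim D$ together with one extra ``sentinel'' vertex; this uses at most $g+1$ cops since $\lvert \mathrm{supp}(E) \rvert \le \deg E = g$, and the sentinel is genuinely needed because, as the example $G = K_n$ (where $\gon = \tw = n-1$) shows, the bound $g$ alone would be too strong. The robber is confined to a connected region $R$ disjoint from the cops, and the strategy will be monotone, so $R$ only shrinks. To shrink $R$, pick $q \in R$ and transform $E$ into the $q$-reduced divisor $D_q$ by the usual routine: run Dhar's burning algorithm from $q$ and repeatedly fire the maximal unburnt set. Since $D$ has positive rank we have $D_q(q) \ge 1$, so once the transformation is finished a cop sits on $q$ and $R$ has lost at least the vertex $q$. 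Because $E$ vanishes on the connected set $R$, the fire engulfs all of $R$ immediately, so every firing of an unburnt set ships chips only onto vertices lying in or adjacent to $R$; I would use this to argue that the cop set never releases its grip on the robber. Iterating the phase with fresh choices of $q$ drives $R$ down to nothing.

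Two points require care. The first is termination within a phase: I would invoke the standard fact that iteratively firing the unburnt set reaches the $q$-reduced divisor after finitely many steps, via the usual energy functional that strictly decreases at each legal set-firing. The second, and the real obstacle, is showing that the chip-firing moves never let the robber slip out: the dangerous case is a vertex on the boundary of $R$ that runs out of chips when an unburnt set is fired, momentarily opening a gap in the wall. One must check that the chips fired off such a boundary vertex land precisely on the vertices of $R$ adjacent to it, so that the support of the new divisor still separates the (now smaller) region from the rest of $G$; this is where connectivity of $R$ and the local rules of Dhar's algorithm enter, and distilling it into a clean monotone invariant is the heart of the argument.

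Finally, I would note the tempting alternative of building a tree decomposition of width $\le g$ directly from $D$ --- for instance with nodes the supports of the effective divisors occurring along firing sequences from a fixed base vertex --- but verifying the path and adjacency axioms of a tree decomposition appears to be at least as delicate as the confinement argument above, so I would stick with the game-theoretic route.
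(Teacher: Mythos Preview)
The paper does not give a proof of this proposition: it is quoted, with citation, as the main result of \cite{dBG} (van Dobben de Bruyn and Gijswijt), and is used as a black box. So there is no ``paper's own proof'' to compare against.

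That said, your outline is very much in the spirit of the argument in \cite{dBG}. There, too, one takes a positive-rank effective divisor $D$ of degree $g=\gon(G)$, uses the Seymour--Thomas duality between treewidth and monotone search strategies, and drives a search by repeatedly passing from the current effective divisor to the $q$-reduced representative via iterated firing of the Dhar unburnt set, for a sequence of targets $q$ in the robber's territory. You have also correctly located the only nontrivial step: verifying that each firing of the unburnt set $U$ gives a \emph{monotone} cop move, i.e.\ that the robber's component can only shrink when the cops pass from $\mathrm{supp}(E)$ to $\mathrm{supp}(E')$. Two small corrections to your sketch: in the Seymour--Thomas game the robber is \emph{visible}, not invisible (this does not affect the equivalence with treewidth, but it matters for how you phrase the strategy); and your sentence ``the chips fired off such a boundary vertex land precisely on the vertices of $R$ adjacent to it'' is not literally true, since firing $U$ sends chips from a boundary vertex $w\in U$ to \emph{all} of its neighbours in the burnt set $V\setminus U$, not just those in $R$. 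The correct invariant is that every vertex of $\partial R$ that lies in $V\setminus U$ keeps (indeed, can only gain) chips, while any vertex of $\partial R\cap U$ that drops to zero has all of its burnt neighbours in $R$ now carrying chips; together these facts give $R'\subseteq R$ and block escape during the transition. Once this is made precise, the extra ``sentinel'' cop is not actually needed at the level of counting, because the supports one moves between all have size at most $g$ and the Seymour--Thomas bound already reads $\tw(G)+1\le$ (number of cops); your $K_n$ remark shows the inequality $\tw(G)\le\gon(G)$ is tight, not that a $(g{+}1)$st cop is required in the strategy.
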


Although we will not use it, we note the following simple consequence.

\begin{corollary}
For a simple graph $G$,
$$ \gon(G) \geq \min \{ \val (v) \vert v \in V(G) \} . $$
\end{corollary}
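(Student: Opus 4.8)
The plan is to deduce this from Proposition \ref{prop:treewidth} together with the classical observation that the treewidth of a simple graph is bounded below by its minimum valence. Since $\gon(G) \geq \tw(G)$, it suffices to prove that every tree decomposition of $G$ has width at least $\delta := \min \{ \val(v) \mid v \in V(G) \}$, and hence that $\tw(G) \geq \delta$.

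First I would perform a small normalization. Among all tree decompositions of $G$ of minimum width, choose one, call it $T$, with the fewest possible nodes. I claim that either $T$ consists of a single node, or no leaf of $T$ is contained in its unique neighbor: indeed, if a leaf $L$ satisfies $L \subseteq N$ where $N$ is the node adjacent to $L$ in $T$, then deleting $L$ from $T$ still satisfies axioms (1)--(3) (every vertex of $L$ and every edge covered by $L$ is already covered by $N$, and removing a leaf preserves the tree structure and the connectivity condition (2)) and does not increase the width, contradicting minimality of the number of nodes.

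Now I would split into the two cases. If $T$ has a single node, then by axiom (1) that node equals $V(G)$, so its width is $|V(G)| - 1 \geq \delta$, since in a simple graph every valence is at most $|V(G)| - 1$. Otherwise, fix a leaf $L$ of $T$ with neighbor $N$; by the normalization there is a vertex $v \in L \setminus N$. Any node of $T$ other than $L$ is separated from $L$ by $N$, so by axiom (2) a node containing $v$ other than $L$ would force $v \in N$, a contradiction; hence $L$ is the unique node of $T$ containing $v$. By axiom (3), every $G$-neighbor of $v$ must lie in some common node with $v$, which can only be $L$, so $\{v\} \cup N_G(v) \subseteq L$. Therefore the width of $T$ is at least $|L| - 1 \geq \val(v) \geq \delta$. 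In all cases $\tw(G) \geq \delta$, and combined with Proposition \ref{prop:treewidth} this gives $\gon(G) \geq \delta = \min\{\val(v) \mid v \in V(G)\}$.

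The argument is essentially routine; the only step requiring a little care is the normalization that removes leaves contained in their neighbors (together with the degenerate single-node case), since it is precisely this that guarantees the existence of the ``private'' vertex $v$ appearing in only one node, which drives the counting.
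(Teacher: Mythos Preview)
Your proof is correct and follows the same route as the paper: both deduce the corollary from Proposition~\ref{prop:treewidth} together with the bound $\tw(G) \geq \min_{v} \val(v)$. The only difference is that the paper simply cites this treewidth bound from \cite{BK11}, whereas you supply a direct (and standard) argument from the definition of a tree decomposition.
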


\begin{proof}
The result follows immediately from Proposition \ref{prop:treewidth} and the fact that $\tw (G) \geq \min \{ \val (v) \vert v \in V(G) \}$ (see \cite{BK11}).
\end{proof}

The treewidth of random graphs has been studied extensively in \cite{WLCX11} and \cite{Gao12}.

\begin{lemma}\cite{WLCX11}
\label{lem:ExpectedTreewidth}
Let $p(n) = \frac{c(n)}{n}$, and suppose that $c(n) \ll n$ is unbounded.  Then
$$ \tw (G(n,p)) \geq n - o(n) $$
with high probability.
\end{lemma}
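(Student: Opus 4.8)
The plan is to obtain the bound from the elimination-ordering (minimal-triangulation) description of treewidth rather than from separators, which are too weak here: $G(n,p)$ with $p=c(n)/n$ has balanced vertex separators of size only about $n/2$ (delete the neighbourhood of a single vertex, together with enough further vertices to split the remainder evenly), so separator- or bramble-type bounds cannot reach treewidth beyond order $n/2$. Recall instead that for a linear order $\sigma=(v_1,\dots,v_n)$ of $V(G)$ one calls $v_j$ a \emph{back-neighbour} of $v_i$ (with $i<j$) if $G$ contains a $v_i$--$v_j$ path all of whose internal vertices lie in $\{v_1,\dots,v_{i-1}\}$; writing $\mathrm{width}(\sigma)$ for the maximum over $i$ of the number of back-neighbours of $v_i$, it is classical that $\tw(G)=\min_\sigma\mathrm{width}(\sigma)$. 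So it suffices to show that, with high probability, \emph{every} order $\sigma$ satisfies $\mathrm{width}(\sigma)\ge n-o(n)$.

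The one probabilistic ingredient is a strong connectivity property of random graphs of diverging average degree. Put $\varepsilon_n:=4\ln c(n)/c(n)$, so that $\varepsilon_n\to 0$ while $\varepsilon_n n\to\infty$ (using $c(n)\ll n$). A first-moment computation shows that with high probability
$$(\star)\qquad \text{any two disjoint }A,B\subseteq V(G)\text{ with }|A|,|B|\ge\varepsilon_n n\text{ are joined by an edge,}$$
since the expected number of violating pairs is at most $\binom{n}{\lceil\varepsilon_n n\rceil}^2(1-p)^{\lceil\varepsilon_n n\rceil^2}$, whose logarithm is at most $\varepsilon_n n\big(2\ln(e/\varepsilon_n)-c(n)\varepsilon_n\big)+O(\ln n)$ and tends to $-\infty$ because $c(n)\varepsilon_n=4\ln c(n)$ dominates $2\ln(e/\varepsilon_n)\approx 2\ln c(n)$. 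From $(\star)$ I extract two deterministic facts: (i) if $|S|\ge 3\varepsilon_n n$ then $G[S]$ has a component of size $>|S|-\varepsilon_n n$, for otherwise its components can be split into two bunches each of total size $\ge\varepsilon_n n$ with no edge between them; and (ii) if $|C|\ge\varepsilon_n n$ then fewer than $\varepsilon_n n$ vertices outside $C$ have no neighbour in $C$, since such vertices together with $C$ would violate $(\star)$.

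Now fix any order $\sigma$, set $j:=\lceil 6\varepsilon_n n\rceil$ and $Y:=\{v_1,\dots,v_j\}$. By (i), $G[Y]$ has a component $C$ with $|C|>|Y|-\varepsilon_n n\ge 5\varepsilon_n n$. Let $v_\ell$ be the last vertex of $C$ in $\sigma$, so that $\ell\le j$ and $C\setminus\{v_\ell\}\subseteq\{v_1,\dots,v_{\ell-1}\}$. Since $G[C]$ is connected, every vertex $w$ that comes after $v_\ell$ in $\sigma$ and has a neighbour in $C$ is joined to $v_\ell$ by a path whose internal vertices lie in $C\setminus\{v_\ell\}$, hence is a back-neighbour of $v_\ell$. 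By (ii), which applies because $|C|\ge\varepsilon_n n$, all but fewer than $\varepsilon_n n$ vertices of $G$ have a neighbour in $C$, so $v_\ell$ has at least $(n-\ell)-\varepsilon_n n\ge n-7\varepsilon_n n-1$ back-neighbours. Therefore $\mathrm{width}(\sigma)\ge n-7\varepsilon_n n-1$, and since $\sigma$ was arbitrary, $\tw(G(n,p))\ge n-O\big(n\ln c(n)/c(n)\big)=n-o(n)$ with high probability; this also gives an explicit bound on the error term $n-\tw(G(n,p))$.

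I expect the real obstacle to be conceptual: recognising that separator- and bramble-based lower bounds genuinely stall near $n/2$, so that one is forced to argue about triangulations and to show that every triangulation of $G(n,p)$ contains a near-spanning clique. Inside that scheme the one delicate quantitative point is the two-sided choice of the cutoff $j\approx\varepsilon_n n$: it must be large enough that the prefix $\{v_1,\dots,v_j\}$ already induces a connected set dominating all but $o(n)$ vertices, yet small enough that $n-j=n-o(n)$ --- and this is exactly what $c(n)\to\infty$ buys us, through $\varepsilon_n=o(1)$. The remaining ingredients are routine: the equivalence $\tw(G)=\min_\sigma\mathrm{width}(\sigma)$ is classical, and $(\star)$ is a one-line union bound.
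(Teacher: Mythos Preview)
The paper does not prove this lemma; it is quoted verbatim from \cite{WLCX11} and used as a black box. So there is nothing to compare your argument against in this paper.

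That said, your argument is a correct self-contained proof, and it is worth recording what it does. You use the elimination-ordering (Rose--Tarjan--Lueker) characterisation $\tw(G)=\min_\sigma\mathrm{width}(\sigma)$, establish by a first-moment bound the edge-expansion property $(\star)$ that any two disjoint vertex sets of size $\ge\varepsilon_n n$ with $\varepsilon_n=4\ln c(n)/c(n)$ span an edge, and then show deterministically that under $(\star)$ every ordering has some early vertex with $n-O(\varepsilon_n n)$ back-neighbours. The derivations of (i) and (ii) from $(\star)$, and the back-neighbour count via the giant component $C$ of the length-$\lceil 6\varepsilon_n n\rceil$ prefix, are all sound; the final bound $\tw(G(n,p))\ge n-O\big(n\ln c(n)/c(n)\big)$ even makes the $o(n)$ explicit.

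One cosmetic point: your opening remark that balanced separators in $G(n,p)$ have size ``only about $n/2$'' via deleting a vertex neighbourhood is not right as stated (a single neighbourhood has size $\approx c(n)=o(n)$ and does not by itself yield a balanced separation), but this is purely motivational and plays no role in the proof.
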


\begin{theorem}
\label{thm:LowerBound}
Let $p(n) = \frac{c(n)}{n}$, and suppose that $c(n) \ll n$ is unbounded.  Then
$$ \lim_{n \to \infty}\mathbb{P}(\gon (G(n,p)) \geq n - o(n)) = 1 . $$
\end{theorem}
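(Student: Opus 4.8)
The plan is to derive Theorem~\ref{thm:LowerBound} by chaining together the two facts already assembled in this section: Proposition~\ref{prop:treewidth}, which gives $\gon(G) \geq \tw(G)$ for any simple graph, and Lemma~\ref{lem:ExpectedTreewidth}, which gives $\tw(G(n,p)) \geq n - o(n)$ with high probability. The combination is essentially immediate, but one should be careful to state it cleanly so that the asymptotic notation means what it should.

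First I would fix the setup: with $p(n) = c(n)/n$ and $c(n) \ll n$ unbounded, let $f(n) = o(n)$ be the specific function coming from Lemma~\ref{lem:ExpectedTreewidth}, so that $\mathbb{P}(\tw(G(n,p)) \geq n - f(n)) \to 1$ as $n \to \infty$. Next I would observe that $G(n,p)$ is (with probability one) a simple graph, so Proposition~\ref{prop:treewidth} applies pointwise: on the event $\{\tw(G(n,p)) \geq n - f(n)\}$ we automatically have $\gon(G(n,p)) \geq \tw(G(n,p)) \geq n - f(n)$. Hence
$$ \mathbb{P}(\gon(G(n,p)) \geq n - f(n)) \geq \mathbb{P}(\tw(G(n,p)) \geq n - f(n)) \to 1, $$
and since a probability is at most $1$, the limit equals $1$. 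This establishes the statement with the same $o(n)$ term as in the treewidth lemma.

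The only genuinely delicate point — and the one I would flag as the ``main obstacle,'' though it is more a matter of bookkeeping than of mathematics — is interpreting the statement $\gon(G(n,p)) \geq n - o(n)$ with probability tending to $1$. The cleanest reading is the existence of a single function $f(n) = o(n)$ for which the displayed probability tends to $1$, and that is exactly what the argument above delivers by inheriting $f$ from Lemma~\ref{lem:ExpectedTreewidth}. I would make this explicit rather than leaving the $o(n)$ floating, both here and in the eventual passage to expectations (Theorem~\ref{thm:mainthm}), where this high-probability lower bound gets combined with the upper bound and the trivial pointwise bound $\gon(G) \geq 1$ to control the expectation; but that combination belongs to the later sections and is not needed for Theorem~\ref{thm:LowerBound} itself.
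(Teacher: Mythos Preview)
Your proposal is correct and follows essentially the same approach as the paper: combine Lemma~\ref{lem:ExpectedTreewidth} with Proposition~\ref{prop:treewidth} to pass the high-probability lower bound from treewidth to gonality. Your extra care in naming the $o(n)$ function explicitly is a nice touch, but the underlying argument is identical to the paper's.
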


\begin{proof}
By Lemma \ref{lem:ExpectedTreewidth}, the treewidth is bounded below with high probability by $n - o(n)$.  Furthermore, by Proposition \ref{prop:treewidth}, we know that gonality is bounded below by the treewidth, so with high probability the gonality is also bounded below by $n - o(n)$.
\end{proof}

%\begin{corollary}
%Fix $0 < p < 1$. Then
%$$ \lim_{n \to \infty}\mathbb{P}(\gon (G(n,p)) \geq n - o(n)) = 1 . $$
%\end{corollary}

%\begin{proof}
%Let $G'$ be a graph obtained from $G$ by adding edges between the vertices.  It is clear that any tree decomposition of $G'$ is also a tree decomposition of $G$, and hence that $\tw (G) \leq \tw (G')$.  From this we see that, if $p' \geq p$, then $\mathbb{E} (\gon (G(n,p'))) \geq \mathbb{E} (\gon (G(n,p)))$.  The result then follows from Lemma \ref{lem:ExpectedTreewidth}.
%\end{proof}

\section{An Upper Bound}

In this section, we obtain an upper bound on the expected gonality of a random graph.  Together with the results of the previous section, this will imply that the gonality of a random graph is asymptotically equal to the number of vertices.  We note that the number of vertices $n$ is a very simple upper bound for the gonality of a graph, and together with Theorem \ref{thm:LowerBound}, this would be enough to establish the main theorem.  We actually go a bit further and obtain a bound on the expected value of $n - \gon (G(n,p))$.  In the future, it would be interesting to explore this with higher precision.

Recall that an \emph{independent set} in a graph is a set of vertices, no pair of which are connected by an edge.  The \emph{independence number} $\alpha (G)$ of a graph $G$ is defined to be the maximal size of an independent set.

\begin{theorem}
\label{thm:independence}
If $G$ is a simple graph with $n$ vertices, then $\gon (G) \leq n - \alpha (G)$.
\end{theorem}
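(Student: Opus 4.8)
The plan is to exhibit, for any graph $G$ with $n$ vertices and any maximum independent set $A$ with $|A| = \alpha(G)$, an effective divisor of degree $n - \alpha(G)$ having positive rank. The natural candidate is $D = \sum_{v \notin A} v$, i.e. the divisor placing one chip on each vertex outside $A$; this has degree $n - \alpha(G)$ and is effective, so it suffices to show $r(D) \geq 1$, which by definition means $D - w$ is equivalent to an effective divisor for every vertex $w$ of $G$. If $w \notin A$ this is immediate, since then $D - w$ is already effective. So the whole content is in handling $w \in A$.

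For $w \in A$, I would use a chip-firing / Dhar-type argument. Fire the set $A$ (or more precisely, borrow along the complement): since $A$ is independent, the only edges leaving $A$ go to $V \setminus A$, and every vertex $v \notin A$ receives, under the firing of the whole set $A$, exactly $\deg_A(v)$ chips (where $\deg_A(v)$ is the number of neighbors of $v$ in $A$) while each vertex of $A$ loses its full valence. Concretely, consider moving one chip from each vertex of $A$ outward: the divisor $D - w + \sum_{v \in A} v$ obtained by firing the complement $V \setminus A$ once — wait, I should set this up cleanly. The cleanest formulation: let $D' = D - w + \big(\text{effect of firing } V\setminus A\big)$. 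Firing every vertex of $V \setminus A$ once sends, from each such vertex, one chip along each incident edge; vertices in $A$ only gain chips (they have no edges among themselves), and each vertex $v \notin A$ has net change $\deg_A(v) - \deg_{V\setminus A}(v) - \deg_A(v)\cdot 0$... I need to be careful, but the upshot I am aiming for is that after an appropriate single firing move, the chip originally absent at $w$ is replenished from its neighbors in $V \setminus A$, and no vertex goes negative.

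Here is the argument I expect to work. Start from $D$ and fire the set $V \setminus A$: each $v \in V\setminus A$ loses $\val(v)$ and gains back one chip for each neighbor in $V \setminus A$, for a net loss of $\deg_A(v) = $ (number of neighbors in $A$); each $w \in A$ gains $\deg_{V\setminus A}(w) = \val(w)$ chips. The resulting divisor $D''$ is equivalent to $D$, is supported on $A$ with $D''(w) = \val(w) \geq 1$ for each $w \in A$ that has a neighbor — but a vertex $v \notin A$ could go negative if $\deg_A(v) > 1$. To avoid this, instead of firing all of $V\setminus A$ at once, I would invoke Dhar's burning algorithm from $w$, or simply argue: $D - w$ fails to be effective only at $w$, so run the burning algorithm with source $w$ on the divisor $D$; the set $S$ of unburnt vertices (if nonempty) can be fired without creating a negative vertex, strictly moving a chip toward $w$, and since $G$ is connected (or we work componentwise) iteration reaches $w$, showing $D - w \sim$ effective. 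The one thing that must be checked is that this process does not get stuck, i.e. that the independence of $A$ guarantees enough chips flow. I expect the main obstacle to be exactly this bookkeeping: verifying that the chip-firing moves never drive an intermediate divisor negative, for which the independence of $A$ (so that the chips on $V\setminus A$ are not "wasted" on edges internal to $A$) is the essential hypothesis. The connectedness assumption is harmless: if $G$ is disconnected, apply the argument on each component and take a maximum independent set component by component, noting $n - \alpha(G)$ splits additively.
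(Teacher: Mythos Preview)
Your setup is exactly the paper's: take a maximum independent set $A$, put $D=\sum_{v\notin A}v$, and check $r(D)\ge 1$. The case $w\notin A$ is fine. The gap is entirely in the case $w\in A$: you correctly observe that firing all of $V\setminus A$ can send a vertex $v\notin A$ negative when $\deg_A(v)>1$, and then you retreat to Dhar's algorithm but never verify that the burning process actually terminates with a chip at $w$; you yourself flag this as ``the one thing that must be checked.'' As written, that is not a proof.

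The fix is much simpler than Dhar and is what the paper does: instead of firing $V\setminus A$, fire $V\setminus\{w\}$ (equivalently, have $w$ borrow once). Under this single set-firing, $w$ gains $\val(w)$ chips, and every other vertex $u$ loses exactly the number of edges from $u$ to $w$, which is at most $1$ because $G$ is simple. Now use independence of $A$: every neighbor of $w$ lies in $V\setminus A$ and so carried a chip in $D$, hence ends with $\ge 0$; every non-neighbor of $w$ (in particular every other vertex of $A$) loses nothing. The resulting divisor is therefore effective with $\val(w)\ge 1$ chips at $w$, so $D-w$ is linearly equivalent to an effective divisor. That is the whole argument; the ``bookkeeping obstacle'' you anticipated disappears once you fire the complement of the single vertex $w$ rather than the complement of $A$.
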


\begin{proof}
Let $I$ be a maximal independent set, and let $D$ be the sum of the vertices in the complement of $I$.  We will show that $D$ has positive rank.  If $v \notin I$, then $D-v$ is effective by definition.  On the other hand, if $v \in I$, then since all of the neighbors of $v$ are not in $I$ and the graph is simple, by firing all of the vertices other than $v$ we obtain an effective divisor equivalent to $D$ with at least one chip on $v$.  It follows that $D$ has rank at least one, hence $\gon (G) \leq \deg (D) = n - \alpha (G)$.
\end{proof}

Note that gonality $n - 1$ is achieved by the complete graph $K_n$, so this bound is sharp.  Note further that the complete graph is the only simple graph with $n$ vertices whose gonality is $n-1$.

The expected independence number of a random graph has been studied in \cite{Frieze90}.

\begin{lemma}\cite{Frieze90}
\label{lem:Frieze}
Let $p(n) = \frac{c(n)}{n}$, and suppose that $c(n) \ll n$ is unbounded..  For any $\epsilon > 0$, we have
$$ \lim_{n \to \infty}\mathbb{P}(|\alpha(G(n,p))-\frac{2}{p(n)}(\log c(n) - \log\log c(n) - \log 2 + 1)| \leq \frac{\epsilon}{p(n)}) = 1 . $$
\end{lemma}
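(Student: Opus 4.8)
Throughout, abbreviate $p = p(n) = c/n$ and $c = c(n)$. The lemma is a two-sided concentration estimate: with
$$ k_0 \;=\; \frac{2}{p}\bigl(\log c - \log\log c - \log 2 + 1\bigr) $$
(and $\log$ the natural logarithm), the plan is to show $k_0 - \epsilon/p \le \alpha(G(n,p)) \le k_0 + \epsilon/p$ with probability tending to $1$, which splits into the upper-tail bound $\mathbb{P}(\alpha(G(n,p)) > k_0 + \epsilon/p) \to 0$ and the lower-tail bound $\mathbb{P}(\alpha(G(n,p)) < k_0 - \epsilon/p) \to 0$.

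For the upper tail I would use the first moment method. Let $X_k$ be the number of independent sets of size $k$ in $G(n,p)$; since the graph is simple with independent edges, $\mathbb{E}[X_k] = \binom{n}{k}(1-p)^{\binom{k}{2}}$. Taking $k = \lceil k_0 + \epsilon/p\rceil$ and using $\binom{n}{k} \le (en/k)^{k}$ and $1 - p \le e^{-p}$ gives
$$ \log \mathbb{E}[X_k] \ \le\ k\Bigl(\log\tfrac{n}{k} + 1 - \tfrac{(k-1)p}{2}\Bigr). $$
Substituting $k_0$ one finds $n/k \sim c/(2\log c)$, hence $\log(n/k) + 1 = \tfrac{kp}{2} - \tfrac{\epsilon}{2} + o(1)$ — this is exactly where the lower-order terms $-\log\log c - \log 2 + 1$ of $k_0$ are spent — so $\log \mathbb{E}[X_k] \to -\infty$, and Markov's inequality gives $\mathbb{P}(\alpha(G(n,p)) \ge k) = \mathbb{P}(X_k \ge 1) \le \mathbb{E}[X_k] \to 0$.

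For the lower tail, fix $k' = \lfloor k_0 - \tfrac{\epsilon}{2p}\rfloor$; running the same estimate in the other direction (now with the two-sided bound $\binom{n}{k'} = (1+o(1))\,(en/k')^{k'}e^{-(k')^{2}/2n}/\sqrt{2\pi k'}$) shows $\mathbb{E}[X_{k'}] \to \infty$. When $c$ grows fast enough — say $c \gg n^{1/3}(\log n)^{2}$ — the second moment method applies directly: writing $\mu = (k')^{2}/n$,
$$ \frac{\mathbb{E}[X_{k'}^{2}]}{\mathbb{E}[X_{k'}]^{2}} \ =\ \sum_{j=0}^{k'} \frac{\binom{k'}{j}\binom{n-k'}{k'-j}}{\binom{n}{k'}}\,(1-p)^{-\binom{j}{2}}, $$
and since these weights sum to $1$ and concentrate near $j \approx \mu$, where $(1-p)^{-\binom{j}{2}} = e^{\Theta(p\mu^{2})} = 1 + o(1)$ because $p\mu^{2} = p(k')^{4}/n^{2} \to 0$ in this range, the ratio tends to $1$ and Chebyshev gives $\mathbb{P}(X_{k'} > 0) \to 1$. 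For slowly growing $c$ the variance of $X_{k'}$ is too large for this, and one proceeds in two steps: (i) invoke a refinement of the second moment method — a deletion or conditioning argument, as in \cite{Frieze90} — to show $\mathbb{P}(\alpha(G(n,p)) \ge k')$ stays bounded away from $0$; then (ii) boost this to high probability with Azuma's inequality. Since $\alpha$ changes by at most $1$ when the edges at one vertex are resampled, the vertex-exposure martingale gives $\mathbb{P}(|\alpha(G(n,p)) - \mathbb{E}\,\alpha(G(n,p))| \ge t) \le 2e^{-t^{2}/2n}$; choosing $t$ with $\sqrt{n} \ll t \ll 1/p$ — possible precisely because $c = o(\sqrt{n/\log n})$ in this regime — the constant lower bound from (i) forces $\mathbb{E}\,\alpha(G(n,p)) \ge k' - t$, so $\alpha(G(n,p)) \ge k' - 2t \ge k_0 - \epsilon/p$ with high probability. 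The two regimes overlap (indeed $n^{1/3}(\log n)^{2} = o(\sqrt{n/\log n})$), so together they cover all admissible $c(n)$.

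The main obstacle is step (i): showing $\mathbb{P}(\alpha(G(n,p)) \ge k')$ does not go to $0$ for slowly growing $c$, the regime in which the naive second moment computation degenerates — this is the technical core of \cite{Frieze90}. The only other genuine subtlety is pinning down the precise constant $-\log\log c - \log 2 + 1$: both $\binom{n}{k}$ and $(1-p)^{\binom{k}{2}}$ must be expanded to sufficient accuracy, and one must check that the $\Theta(1/p)$-wide target window is not swallowed by the error when quantities are rescaled by $p$.
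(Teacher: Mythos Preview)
The paper does not prove this lemma: it is quoted verbatim from \cite{Frieze90} with no argument given, and is used as a black box to derive Theorem~\ref{thm:UpperBound}. So there is no ``paper's own proof'' to compare against; your proposal is effectively a sketch of Frieze's original argument rather than of anything in this paper.

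As a sketch of the underlying result, your outline is broadly faithful to the standard approach. The first-moment upper bound is clean and correct, including the bookkeeping that identifies the lower-order terms $-\log\log c - \log 2 + 1$. For the lower bound you correctly split into two regimes and identify the right tools in each: direct second moment when $c$ is large, and concentration via vertex-exposure Azuma to boost a constant probability to high probability when $c$ is small. Your honesty about step~(i) --- that establishing $\mathbb{P}(\alpha \ge k')$ bounded away from $0$ for slowly growing $c$ is the genuine technical core and is deferred to \cite{Frieze90} --- is appropriate: that step requires a careful large-deviation or interpolation argument that cannot be summarized in a line, and without it your lower-tail argument is incomplete in that regime. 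One minor point: in the large-$c$ second-moment computation, the claim that the hypergeometric weights ``concentrate near $j \approx \mu$'' and that this alone controls the ratio glosses over the fact that the dominant contribution to $\mathbb{E}[X_{k'}^{2}]/\mathbb{E}[X_{k'}]^{2}$ can come from the tail in $j$; a fuller argument bounds the sum termwise rather than relying on where the mass sits.
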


From this, we can conclude the following.

\begin{theorem}
\label{thm:UpperBound}
Let $p(n) = \frac{c(n)}{n}$, and suppose that $c(n) \ll n$ is unbounded.  Then
$$ \lim_{n \to \infty}\mathbb{P}(\gon (G(n,p)) \leq n - \frac{2}{p(n)}(\log c(n) - \log\log c(n) - \log 2 + 1)) = 1 . $$
\end{theorem}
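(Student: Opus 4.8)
The plan is to combine the deterministic bound of Theorem~\ref{thm:independence} with the probabilistic estimate for the independence number in Lemma~\ref{lem:Frieze}. Concretely, Theorem~\ref{thm:independence} gives the pointwise inequality $\gon(G) \leq n - \alpha(G)$ for every simple graph $G$ on $n$ vertices, so it suffices to show that with probability tending to $1$ the random graph $G(n,p)$ satisfies
$$ \alpha(G(n,p)) \geq \frac{2}{p(n)}\bigl(\log c(n) - \log\log c(n) - \log 2 + 1\bigr). $$
First I would invoke Lemma~\ref{lem:Frieze} with a fixed $\epsilon > 0$ (say $\epsilon = 1$, or any constant): with high probability $\alpha(G(n,p))$ lies within $\frac{\epsilon}{p(n)}$ of $A(n) := \frac{2}{p(n)}(\log c(n) - \log\log c(n) - \log 2 + 1)$, and in particular $\alpha(G(n,p)) \geq A(n) - \frac{\epsilon}{p(n)}$ with high probability. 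This already shows $\gon(G(n,p)) \leq n - A(n) + \frac{\epsilon}{p(n)}$ with high probability, for every $\epsilon > 0$.

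The mild subtlety is that the target inequality in the statement has no $\epsilon$ slack: it asserts $\gon(G(n,p)) \leq n - A(n)$ exactly. To get this I would note that the quantity $\log c(n) - \log\log c(n) - \log 2 + 1$ tends to infinity as $n \to \infty$ (since $c(n)$ is unbounded, $\log c(n)$ dominates $\log\log c(n)$), so $A(n) = \frac{2}{p(n)}(\log c(n) - \log\log c(n) - \log 2 + 1)$ grows strictly faster than $\frac{2}{p(n)}$; hence for any fixed $\epsilon$, eventually $\frac{\epsilon}{p(n)} < A(n) - A(n)\cdot(1-\delta)$ — more simply, I would just apply Lemma~\ref{lem:Frieze} with $\epsilon$ replaced by a slowly shrinking sequence, or argue that since $A(n) \to \infty$ in units of $\frac{1}{p(n)}$, the $\frac{\epsilon}{p(n)}$ error is negligible and can be absorbed. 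The cleanest route: apply Lemma~\ref{lem:Frieze} with $\epsilon_n \to 0$ chosen slowly enough that the high-probability conclusion still holds (e.g. re-examining that the lemma in fact holds for any $\epsilon > 0$ and taking a diagonal sequence), giving $\alpha(G(n,p)) \geq A(n) - o(1/p(n)) \geq A(n)$ eventually. Then combining with $\gon(G) \leq n - \alpha(G)$ yields exactly $\gon(G(n,p)) \leq n - A(n)$ with probability $\to 1$.

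The main (and essentially only) obstacle is this bookkeeping around the $\epsilon/p(n)$ term: one must be slightly careful that the ``for any $\epsilon > 0$'' quantifier in Lemma~\ref{lem:Frieze} can be leveraged to remove the slack entirely, rather than merely pushing it into a $1 - o(1)$ multiplicative factor on $A(n)$. Everything else is a direct substitution. I would close by remarking that this, together with Theorem~\ref{thm:LowerBound} and the trivial bound $\gon(G) \leq n$, immediately gives Theorem~\ref{thm:mainthm}, since $A(n) = o(n)$ (as $1/p(n) = n/c(n)$ and $\log c(n) = o(c(n))$, so $A(n) = o(n)$) forces $n - A(n) \sim n$.
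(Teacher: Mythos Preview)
Your approach is exactly the paper's: invoke Lemma~\ref{lem:Frieze} to bound $\alpha(G(n,p))$ from below, then apply the deterministic inequality $\gon(G)\le n-\alpha(G)$ from Theorem~\ref{thm:independence}. The paper's proof is two sentences and simply stops at the for-all-$\epsilon$ version without removing the slack.

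You are more scrupulous than the paper in flagging the $\epsilon/p(n)$ discrepancy, but your proposed patch contains a slip: the chain ``$\alpha(G(n,p))\ge A(n)-o(1/p(n))\ge A(n)$'' is wrong, since the $o(1/p(n))$ term is positive and $A(n)-o(1/p(n))<A(n)$ strictly. A diagonal choice of $\epsilon_n\to 0$ shrinks the error but cannot reverse its sign. In fact the theorem as literally stated (no $\epsilon$) does not follow from Lemma~\ref{lem:Frieze} as quoted without some further input, and the paper does not supply one either --- it glosses over precisely the point you raise. For the downstream application to Theorem~\ref{thm:mainthm} the discrepancy is harmless, since $\epsilon/p(n)=o(n)$, which is presumably why the paper does not linger on it.
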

\begin{proof}
By Lemma \ref{lem:Frieze}, for any $\epsilon > 0$, we have
$$ \alpha (G(n,p)) > \frac{2}{p(n)}(\log c(n) - \log\log c(n) - \log 2 + 1 - \epsilon ) $$
with probability 1 as $n$ approaches infinity.  By Theorem \ref{thm:independence}, the number $n - \alpha (G(n,p))$ is an upper bound for the gonality of $G(n,p)$.
\end{proof}

\begin{proof}[Proof of Theorem~\ref{thm:mainthm}]
By Theorem \ref{thm:UpperBound}, the gonality of a random graph is bounded above by $n-o(n)$.  Similarly, by Theorem \ref{thm:LowerBound}, the gonality of a random graph is bounded below by $n-o(n)$.  It follows that
$$ \lim_{n \to \infty} \frac{1}{n} \mathbb{E} (\gon (G(n,p))) = \lim_{n \to \infty} \frac{n-o(n)}{n} = 1 . $$
\end{proof}

\bibliographystyle{alpha}
\bibliography{mybib}

\newcommand{\etalchar}[1]{$^{#1}$}
\begin{thebibliography}{WLCX11}

\bibitem[AK14]{AK14}
O.~Amini and F.~Kool.
\newblock A spectral lower bound for the divisorial gonality of metric graphs.
\newblock arXiv:1407.5614, 2014.

\bibitem[Bak08]{Baker08}
M.~Baker.
\newblock Specialization of linear systems from curves to graphs.
\newblock {\em Algebra Number Theory}, 2(6):613--653, 2008.

\bibitem[BK11]{BK11}
Hans~L. Bodlaender and Arie M. C.~A. Koster.
\newblock Treewidth computations {II}. {L}ower bounds.
\newblock {\em Inform. and Comput.}, 209(7):1103--1119, 2011.

\bibitem[BN07]{BakerNorine07}
M.~Baker and S.~Norine.
\newblock {R}iemann-{R}och and {A}bel-{J}acobi theory on a finite graph.
\newblock {\em Adv. Math.}, 215(2):766--788, 2007.

\bibitem[CDPR12]{tropicalBN}
F.~Cools, J.~Draisma, S.~Payne, and E.~Robeva.
\newblock A tropical proof of the {B}rill-{N}oether theorem.
\newblock {\em Adv. Math.}, 230(2):759--776, 2012.

\bibitem[CFK13]{CFK13}
G.~Cornelissen, K.~Fumiharu, and J.~Kool.
\newblock A combinatorial {L}i-{Y}au inequality and rational points on curves.
\newblock preprint, arXiv:1211.2681, 2013.

\bibitem[CKL{\etalchar{+}}14]{CKLPW14}
J.~Clancy, N.~Kaplan, T.~Leake, S.~Payne, and M.~Wood.
\newblock The dsitribution of sandpile groups of random graphs.
\newblock preprint, arXiv:1402.5149, 2014.

\bibitem[CLP13]{CLP13}
Julien Clancy, Timothy Leake, and Sam Payne.
\newblock A note on jacobians, tutte polynomials, and two-variable zeta
  functions of graphs.
\newblock {\em arXiv:1309.3340}, 2013.

\bibitem[CM91]{CM91}
Marc Coppens and Gerriet Martens.
\newblock Secant spaces and {C}lifford's theorem.
\newblock {\em Compositio Math.}, 78(2):193--212, 1991.

\bibitem[dBG14]{dBG}
Josee van~Dobben de~Bruyn and Dion Gijswijt.
\newblock Treewidth is a lower bound on graph gonality.
\newblock arXiv:1407.7055, 2014.

\bibitem[Fri90]{Frieze90}
A.~M. Frieze.
\newblock On the independence number of random graphs.
\newblock {\em Discrete Math.}, 81(2):171--175, 1990.

\bibitem[Gao12]{Gao12}
Yong Gao.
\newblock Treewidth of {E}rd{\H o}s-{R}\'enyi random graphs, random
  intersection graphs, and scale-free random graphs.
\newblock {\em Discrete Appl. Math.}, 160(4-5):566--578, 2012.

\bibitem[GH80]{GriffithsHarris80}
P.~Griffiths and J.~Harris.
\newblock On the variety of special linear systems on a general algebraic
  curve.
\newblock {\em Duke Math. J.}, 47(1):233--272, 1980.

\bibitem[Jen14]{Heawood}
David Jensen.
\newblock The locus of {B}rill-{N}oether general graphs is not dense.
\newblock 2014.

\bibitem[Len14]{Len12}
Y.~Len.
\newblock The {B}rill-{N}oether rank of a tropical curve.
\newblock To appear in J. Algebr. Comb., arXiv:1209.6309, 2014.

\bibitem[Lor08]{Lor08}
Dino Lorenzini.
\newblock Smith normal form and {L}aplacians.
\newblock {\em J. Combin. Theory Ser. B}, 98(6):1271--1300, 2008.

\bibitem[LPP12]{LPP12}
C.-M. Lim, S.~Payne, and N.~Potashnik.
\newblock A note on {B}rill-{N}oether theory and rank-determining sets for
  metric graphs.
\newblock {\em Int. Math. Res. Not. IMRN}, (23):5484--5504, 2012.

\bibitem[WLCX11]{WLCX11}
Chaoyi Wang, Tian Liu, Peng Cui, and Ke~Xu.
\newblock A note on treewidth in random graphs.
\newblock In {\em Combinatorial optimization and applications}, volume 6831 of
  {\em Lecture Notes in Comput. Sci.}, pages 491--499. Springer, Heidelberg,
  2011.

\bibitem[Woo14]{Wood14}
M.~Wood.
\newblock The distribution of sandpile groups of random graphs.
\newblock preprint, 2014.

\end{thebibliography}
\end{document}